\numberwithin{equation}{section}
\numberwithin{figure}{section}
\theoremstyle{plain}
\newtheorem{main theorem}{Main Theorem}
\newtheorem{theorem}{Theorem}[section]
\newtheorem{lemma}[theorem]{Lemma}
\newtheorem{proposition}[theorem]{Proposition}
\theoremstyle{definition}
\newtheorem{remark}[theorem]{Remark}
\newtheorem{example}[theorem]{Example}
\newtheorem{problem}[theorem]{Problem}
\newcommand{\mdim}{\mathrm{mdim}}
\newcommand{\diam}{\mathrm{diam}}
\newcommand{\widim}{\mathrm{Widim}}
\newcommand{\mesh}{\mathrm{mesh}}
\begin{document}

\title[Mean dimension of full shifts]{Mean dimension of full shifts}

\author{Masaki Tsukamoto}

\subjclass[2010]{37B99, 54F45}

\keywords{mean dimension, full shift, cohological dimension theory}

\date{\today}

\thanks{}

\maketitle

\begin{abstract}
Let $K$ be a finite dimensional compact metric space and $K^\mathbb{Z}$ the full shift on the alphabet $K$.
We prove that its mean dimension is given by $\dim K$ or $\dim K-1$ depending on the ``type'' of $K$.
We propose a problem which seems interesting from the view point of infinite dimensional topology.
\end{abstract}

\section{Introduction}  \label{section: introduction}

Let $X$ be a compact metric space and $T:X\to X$ a homeomorphism.
We call $(X,T)$ a \textbf{dynamical system}.
The most basic invariant of dynamical systems is topological entropy.
The topological entropy $h_{\mathrm{top}}(X,T)$ evaluates how many bits per iterate we need to 
describe an orbit in $(X,T)$.
At the end of the last century Gromov \cite{Gromov} introduced a new topological invariant of dynamical systems 
called mean dimension, denoted by $\mdim(X,T)$.
It evaluates how many \textit{parameters} per iterate we need to describe an orbit in $(X,T)$.
Mean dimension has applications to topological dynamics which cannot be touched within the framework of entropy theory 
\cite{Lindenstrauss--Weiss, Lindenstrauss, Gutman 1, Gutman--Lindenstrauss--Tsukamoto, Gutman--Tsukamoto minimal, 
Meyerovitch--Tsukamoto}.

Let $[0,1]^D$ be the $D$-dimensional cube and consider the shift 
$\sigma: \left([0,1]^D\right)^\mathbb{Z}  \to \left([0,1]^D\right)^\mathbb{Z}$
defined by 
\[ \sigma\left((x_n)_{n\in \mathbb{Z}}\right) = \left(x_{n+1}\right)_{n\in\mathbb{Z}}, \quad 
    (x_n\in [0,1]^D). \]
Then its mean dimension is given by \cite[Proposition 3.3]{Lindenstrauss--Weiss}
\begin{equation} \label{eq: mean dimension of Hilbert cube}
   \mdim\left( \left([0,1]^D\right)^\mathbb{Z}, \sigma\right) = D.
\end{equation}
This is the simplest calculation of mean dimension.

The main problem we study here is to extend the calculation (\ref{eq: mean dimension of Hilbert cube}) to arbitrary full shifts.
Let $K$ be a compact metric space and $\sigma: K^\mathbb{Z}\to K^\mathbb{Z}$ the shift on the alphabet $K$.
It is generally true that \cite[Proposition 3.1]{Lindenstrauss--Weiss}
\begin{equation} \label{eq: trivial upper bound on mean dimension}
    \mdim\left(K^\mathbb{Z}, \sigma\right) \leq \dim K, 
\end{equation}    
where the right-hand side is the topological dimension of $K$.
From (\ref{eq: mean dimension of Hilbert cube}) we see that the equality 
$\mdim\left(K^\mathbb{Z}, \sigma\right) = \dim K$ holds if $K$ is a nice space (e.g. topological manifold, polyhedron).
But in general the equality does not hold in (\ref{eq: trivial upper bound on mean dimension}).
It is known (Boltyanski\v{i} \cite{Boltyanskii}; Nagami \cite[\S 40]{Nagami} might be a more easily accessible literature)
that there exists a compact metric space $B$ (the \textit{Boltyanski\v{i} surface})
satisfying 
\begin{equation} \label{eq: Boltyanskii}
  \dim B = 2, \quad \dim B^2 = 3.
\end{equation}
Then it is easy to see $\mdim\left(B^\mathbb{Z}, \sigma\right) \leq 3/2 < 2 = \dim B$.

The full shift $K^\mathbb{Z}$ is certainly the most basic example in mean dimension theory.
So apparently it looks strange that nobody has ever carried out the calculation of its mean dimension.
Probably this is because the above Boltyanski\v{i} surface $B$ gave researchers an impression that 
we cannot expect a clear result.
However this impression turns out to be wrong.
We prove a fairly satisfactory answer below (except for one remaining question; see Problem \ref{problem: infinite dimension}).

Following Dranishnikov \cite[Section 3]{Dranishnikov},
we introduce the notion of ``type'' for finite dimensional compact metric spaces.
Let $K$ be a finite dimensional compact metric space.
It is known that (\cite[Theorem 3.16]{Dranishnikov})
\begin{equation}  \label{eq: basic/exceptional}
    2\dim K-1 \leq \dim K^2 \leq 2\dim K. 
\end{equation}    
Since topological dimension is an integer, we have either 
\[  \dim K^2 = 2\dim K, \text{ or } \dim K^2 = 2\dim K-1. \]
$K$ is said to be \textbf{of basic type} if $\dim K^2 = 2\dim K$.
Otherwise (namely, if $\dim K^2 = 2\dim K-1$), $K$ is said to be \textbf{of exceptional type}.
For example, the Boltyanski\v{i} surface $B$ has exceptional type.
The following is our main result.

\begin{theorem}  \label{main theorem}
Let $K$ be a finite dimensional compact metric space and $\sigma: K^\mathbb{Z}\to K^\mathbb{Z}$
the full shift on the alphabet $K$.
   \begin{enumerate}
      \item If $K$ has basic type then $\mdim\left(K^\mathbb{Z},\sigma\right) = \dim K$.
      \item If $K$ has exceptional type then $\mdim\left(K^\mathbb{Z},\sigma\right) = \dim K-1$.
   \end{enumerate}
\end{theorem}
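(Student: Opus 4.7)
The plan is to compute the mean dimension through the standard formula
\[
  \mdim(K^{\mathbb{Z}},\sigma) \;=\; \lim_{\epsilon\to 0}\lim_{n\to\infty}\frac{\widim_\epsilon(K^n,d_\infty)}{n}
\]
(with $d_\infty$ the supremum metric on $K^n$) and to match both sides with $\lim_{n\to\infty}\dim K^n/n$. A preliminary step is to iterate Dranishnikov's product inequality (\ref{eq: basic/exceptional}) to establish
\[
  \dim K^n \;=\;
  \begin{cases}
    n\dim K, & K \text{ of basic type},\\
    n(\dim K-1)+1, & K \text{ of exceptional type},
  \end{cases}
\]
for every $n\ge 1$. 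The induction step requires verifying, via cohomological-dimension bookkeeping (tracking $\dim_G$ for a suitable field $G$ against $\dim_{\mathbb{Z}}$), that the type of $K$ is inherited by $K\times K^{n-1}$. Consequently $\lim_n\dim K^n/n$ equals $\dim K$ or $\dim K-1$, according to the type.

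The upper bound on mean dimension then follows immediately from $\widim_\epsilon(X)\le\dim X$ applied to $X=K^n$, giving
\[
  \mdim(K^{\mathbb{Z}},\sigma) \;\le\; \lim_{n\to\infty}\frac{\dim K^n}{n},
\]
which is the asserted value in each case.

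The matching lower bound is the crux of the proof and my plan here is cohomological. By Bockstein's theorem one can pick a field $G$ (either $\mathbb{Q}$ or some $\mathbb{Z}/p$) for which the cohomological dimension $\dim_G K$ equals $\dim K$ in the basic case, or $\dim K-1$ in the exceptional case. Because cohomological dimension is additive under products over a field, $\dim_G K^n = n\dim_G K$, and the K\"unneth formula converts a non-zero class $\alpha\in\check{H}^{\dim_G K}(K;G)$ into a non-zero class $\alpha^{\otimes n}\in\check{H}^{n\dim_G K}(K^n;G)$. A quantitative form of the principle ``a non-vanishing \v{C}ech cohomology class obstructs compressions into lower-dimensional polyhedra'' then yields $\widim_{\epsilon_0}(K^n)\ge n\dim_G K$ at a single scale $\epsilon_0>0$ independent of $n$; dividing by $n$ and letting $\epsilon_0\to 0$ gives $\mdim\ge\dim_G K$, matching the upper bound.

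The main obstacle is precisely this uniformity of $\epsilon_0$ in $n$. A generic ``non-zero \v{C}ech class gives positive width'' statement supplies only a scale depending on the space at hand, so applying it naively to each $K^n$ could force the scale to shrink with $n$ and yield no useful information. The way around is to realize $\alpha$ as an explicit cocycle on a finite open cover of $K$ of small mesh; the $n$-fold K\"unneth product then lives on the induced product cover of $K^n$, whose mesh in the supremum metric is controlled by that of the original cover on $K$, so a single $\epsilon_0$ serves all $n$ simultaneously.
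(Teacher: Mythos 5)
Your outline is essentially correct, and its core lemma coincides with the paper's: realizing a nonzero \v{C}ech class on a fixed finite open cover $\alpha$ of $K$, taking the $n$-fold product cover of $K^n$, and using the K\"unneth formula over a field to force $D(\alpha^n)\geq n\dim_G K$, with the Lebesgue number of $\alpha$ supplying the single scale $\epsilon_0$ valid for all $n$. You correctly identify the uniformity of $\epsilon_0$ as the crux and resolve it exactly as the paper does. Where you diverge is in how the two bounds are assembled. The paper proves only the weaker uniform bound $\widim_\delta(K^n,\rho_n)\geq n(\dim K-1)$, using the field $F$ with $\dim_F K\geq \dim K-1$ that Bockstein theory always provides, and then \emph{amplifies}: since $(K^{\mathbb{Z}},\sigma^n)$ is the full shift on the alphabet $K^n$ and $\mdim(X,T^n)=n\,\mdim(X,T)$, the sandwich $\dim K^n-1\leq \mdim(K^{\mathbb{Z}},\sigma^n)\leq \dim K^n$ yields both bounds at once after dividing by $n$ and invoking Dranishnikov's formula for $\dim K^n$. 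You instead choose the field $G$ adapted to the type of $K$ (with $\dim_G K=\dim K$ in the basic case) and get the sharp lower bound directly, while obtaining the upper bound from $\widim_\epsilon(K^n)\leq\dim K^n$. Your route is viable but requires two additional justifications that the paper's amplification trick sidesteps: (i) the equivalence ``$K$ of basic type $\iff$ there is a field $G$ with $\dim_G K=\dim K$'' is not a formality --- its forward direction rests on the inequality $\dim_F K^2\leq 2\dim_F K$, the part of Dranishnikov's Theorem 3.16 whose proof is genuinely involved; and (ii) the upper bound does not follow ``immediately,'' because an open cover of $K^{\mathbb{Z}}$ of small $d_n$-mesh must control coordinates in a window $[-C(\epsilon),\,n+C(\epsilon))$ strictly larger than $[0,n)$, so one must either argue that the $O(C(\epsilon)\dim K)$ correction vanishes after dividing by $n$, or simply use the amplification identity. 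One further small point: cohomological dimension is detected by \emph{relative} groups $\check{H}^q(K,A;G)$ for closed $A\subset K$, so your class $\alpha$ and its K\"unneth powers should be taken in the pairs $(K,A)$ and $(K^n,A_n)=(K,A)^n$, as the paper does; the absolute group $\check{H}^{\dim_G K}(K;G)$ may well vanish.
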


\begin{example}
   \begin{enumerate}
      \item The  Boltyanski\v{i} surface $B$ in (\ref{eq: Boltyanskii}) satisfies $\mdim\left(B^\mathbb{Z},\sigma\right) = 1$.
      
      \item Pontryagin \cite{Pontryagin} constructed compact metric spaces (the \textit{Pontryagin surfaces}) $P$ and $Q$ satisfying 
      \[  \dim P = \dim Q =2 , \quad \dim P\times Q = 3. \]
       and that $P$, $Q$ and $P\times Q$
       are all of basic type. (This can be checked by the calculations of cohomological dimensions given in \cite[\S\S1-3]{Dranishnikov}.)
       Let $\sigma_1:P^\mathbb{Z}\to P^\mathbb{Z}$ and $\sigma_2:Q^\mathbb{Z}\to Q^\mathbb{Z}$ be the shifts.
       Then 
       \[  \mdim\left(P^\mathbb{Z}, \sigma_1\right) = \mdim\left(Q^\mathbb{Z},\sigma_2\right) = 2, \quad 
           \mdim\left(P^\mathbb{Z}\times Q^\mathbb{Z}, \sigma_1\times \sigma_2\right) = 3. \]
       In particular, $\mdim\left(P^\mathbb{Z}\times Q^\mathbb{Z}, \sigma_1\times \sigma_2\right)
       < \mdim\left(P^\mathbb{Z},\sigma_1\right) + \mdim\left(Q^\mathbb{Z},\sigma_2\right)$.
       This is the first example where the inequality 
       \[  \mdim(X\times Y, T\times S) \leq \mdim(X,T) + \mdim(Y,S),   \]
       which holds for any dynamical systems $(X,T)$ and $(Y,S)$, becomes strict.
    \end{enumerate}
\end{example}      

Some readers might wonder why we exclude the case of \textit{infinite dimensional} $K$ in Theorem \ref{main theorem}.
Indeed this is \textit{the} open problem:

\begin{problem}   \label{problem: infinite dimension}
Let $K$ be an infinite dimensional compact metric space.
Is the mean dimension $\mdim\left(K^\mathbb{Z},\sigma\right)$ infinite?
\end{problem}

The difficulty of this problem comes from the following two remarkable phenomena in infinite dimensional topology.

\begin{enumerate}
   \item There exists an infinite dimension compact metric space $K$ containing no \textit{intermediate dimensional subspaces}
   \cite{Henderson, Rubin--Shori--Walsh, Walsh}.
   Namely every closed subset $A\subset K$ satisfies either $\dim A = 0$ or $\dim A = \infty$.

   \item There exists an infinite dimensional compact metric space $K$ which \textit{cohomologically looks like a surface}
   \cite{Dydak--Walsh}.
   Namely for every closed subset $A\subset K$ and $n\geq 3$ the \v{C}ech cohomology group $\check{H}^n(K,A)$ vanishes.
\end{enumerate}

These two difficulties are genuinely infinite dimensional phenomena.
The difficulty (1) implies that we cannot reduce the problem to a finite dimensional case, and the difficulty (2) implies that 
the ordinary cohomology theory is insufficient to solve the problem.
I hope that this paper will stimulate an expert of infinite dimensional topology to solve the above problem.
Then we will get a complete understanding of the mean dimension of full shifts.

\begin{remark}
If $K$ is a positive dimensional compact metric space (possibly infinite dimensional) then 
$\mdim\left(K^\mathbb{Z}, \sigma\right)\geq 1$. This can be proved by the cohomological method used in the proof of 
Theorem \ref{main theorem}.
But it seems difficult to improve this by the same method.
\end{remark}

The purpose of this paper is to prove Theorem \ref{main theorem}.
In \S \ref{section: preliminaries} we review basics of dimension/mean dimension/cohomological dimension theories.
We prove Theorem \ref{main theorem} in \S \ref{section: proof of main theorem}.

\section{Preliminaries}  \label{section: preliminaries}

\subsection{Dimension and mean dimension} \label{subsection: dimension and mean dimension}

Here we review basics of topological dimension and mean dimension
\cite{Engelking, Gromov, Lindenstrauss--Weiss, Lindenstrauss}.

Let $(X,d)$ be a compact metric space.
For an open cover $\alpha$ of $X$ we define the \textbf{order} $\mathrm{ord}(\alpha)$ as the maximum $n\geq 0$ such that 
there exist pairwise distinct open sets $U_0,\dots, U_n\in \alpha$ with $U_0\cap \dots \cap U_n \neq \emptyset$.
An open cover $\beta$ of $X$ is said to be a \textbf{refinement} of $\alpha$ (denoted by $\beta \geq \alpha$)
if for every $V\in \beta$ there exists $U\in \alpha$ satisfying $V\subset U$.
We define the \textbf{degree} $D(\alpha)$ of $\alpha$ as the minimum of $\mathrm{ord}(\beta)$ over all refinements $\beta$
of $\alpha$.
The topological dimension of $X$ is given by 
\[ \dim X = \sup_{\text{$\alpha$: open cover of $X$}} D(\alpha). \]
For an open cover $\alpha$ of $X$ we define $\mathrm{mesh}(\alpha, d)$ as the supremum of the diameter of $U$ over $U\in \alpha$.
For $\varepsilon >0$ we define the \textbf{$\varepsilon$-width dimension} $\widim_\varepsilon (X,d)$ as the minimum of $D(\alpha)$
over all open covers $\alpha$ of $X$ satisfying $\mesh(\alpha, d) < \varepsilon$. 
Then $\dim X$ can be also written as
\[ \dim X = \lim_{\varepsilon \to 0} \widim_\varepsilon (X,d). \]

Let $T:X\to X$ be a homeomorphism.
For each $n \geq 1$ we define a new distance $d_n$ on $X$ by 
\[ d_n(x,y) = \max_{0\leq i <n} d(T^i x, T^i y). \]
We define the mean dimension $\mdim(X,T)$ by 
\[ \mdim(X,T) = \lim_{\varepsilon \to 0} \left(\lim_{n\to \infty} \frac{\widim_\varepsilon(X,d_n)}{n}\right). \]
The limit with respect to $n$ exists because $\widim_\varepsilon(X,d_n)$ is subadditive in $n$.
The mean dimension $\mdim(X,T)$ is a topological invariant of $(X,T)$, namely it is independent of the choice of $d$
compatible with the underlying topology.
The following lemma will be used later \cite[Proposition 2.7]{Lindenstrauss--Weiss}.

\begin{lemma}  \label{lemma: amplification}
$\mdim(X, T^n)= n\cdot \mdim(X,T)$.
\end{lemma}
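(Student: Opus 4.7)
My plan is to exploit the metric-independence of mean dimension, replacing the base metric $d$ by a cleverly chosen iterate metric tailored to $T^n$.

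First, I would define $\tilde{d}(x,y) := d_n(x,y) = \max_{0\leq i < n} d(T^i x, T^i y)$. Since $T$ is a homeomorphism of a compact metric space, $\tilde{d}$ is a metric on $X$ compatible with the original topology. By the topological invariance of mean dimension recalled just before the lemma, we may compute $\mdim(X,T^n)$ using $\tilde{d}$ instead of $d$.

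The key observation is the identity
\[
\tilde{d}_k^{(T^n)}(x,y) \;=\; \max_{0\leq j<k}\tilde{d}(T^{nj}x,T^{nj}y)
\;=\; \max_{0\leq j<k}\max_{0\leq i<n} d(T^{nj+i}x,T^{nj+i}y)\;=\; d_{nk}(x,y),
\]
where $\tilde{d}_k^{(T^n)}$ denotes the $k$-th iterate metric for $T^n$ built from the base metric $\tilde{d}$. Consequently,
\[
\mdim(X,T^n)\;=\;\lim_{\varepsilon\to 0}\lim_{k\to\infty}\frac{\widim_\varepsilon(X,\tilde{d}_k^{(T^n)})}{k}
\;=\;\lim_{\varepsilon\to 0}\lim_{k\to\infty}\frac{\widim_\varepsilon(X,d_{nk})}{k}.
\]

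Finally, I would multiply and divide by $n$ inside the limit to get
\[
\mdim(X,T^n)\;=\;n\cdot\lim_{\varepsilon\to 0}\lim_{k\to\infty}\frac{\widim_\varepsilon(X,d_{nk})}{nk},
\]
and use subadditivity of $m\mapsto\widim_\varepsilon(X,d_m)$ in $m$: Fekete's lemma guarantees that $\widim_\varepsilon(X,d_m)/m$ converges as $m\to\infty$, so evaluating along the subsequence $m=nk$ produces the same limit $\lim_{m\to\infty}\widim_\varepsilon(X,d_m)/m$. Taking $\varepsilon\to 0$ yields $n\cdot\mdim(X,T)$, as required.

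There is no real obstacle here; the only point demanding a little care is the legitimacy of switching from the compatible metric $d$ to the compatible metric $\tilde{d}$ in the definition of $\mdim(X,T^n)$, and the observation that subadditivity lets one extract the limit along the subsequence $\{nk\}_{k\geq 1}$ without changing its value.
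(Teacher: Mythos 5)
Your proof is correct. The paper gives no proof of this lemma at all --- it is quoted from Lindenstrauss--Weiss \cite[Proposition 2.7]{Lindenstrauss--Weiss} --- and your argument (pass to the compatible metric $\tilde d=d_n$, observe that the $k$-th iterate metric of $T^n$ with base $\tilde d$ is exactly $d_{nk}$, then use subadditivity to identify the limit along the subsequence $m=nk$ with the full limit) is precisely the standard proof of that cited result, with the one delicate point, the legitimacy of replacing $d$ by $\tilde d$, correctly identified and justified by the topological invariance of mean dimension.
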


\subsection{Cohomological dimension}  \label{subsection: cohomological dimension}

We review cohomological dimension theory here.
An excellent reference is the survey of Dranishnikov \cite{Dranishnikov}.
We need only basic results, which are all covered by \S\S 1-3 of \cite{Dranishnikov}.

Cohomological dimension theory uses the \v{C}ech cohmology, which is different from the
more standard singular cohomology.
So we first review its definition, following \cite[pp. 358-360]{Spanier} and \cite[\S 34]{Nagami}. 
Let $K$ be a compact metric space (here we switch our notation from $X$ to $K$ because we will consider 
$X= K^\mathbb{Z}$ later) and $G$ an Abelian group.
Let $A\subset K$ be a closed subset.
Let $\alpha$ be an open cover of $K$. (Since $K$ is compact, it is enough to consider the case of 
finite open covers $\alpha$.)
Set $\alpha|_A = \{U\in \alpha|\, U\cap A\neq \emptyset \}$.
We denote by $N(\alpha)$ and $N(\alpha|_A)$ the nerve complexes of $\alpha$ and $(U\cap A)_{U\in \alpha|_A}$ respectively.
We naturally consider $N(\alpha|_A)$ as a subcomplex of $N(\alpha)$.
Let 
\begin{equation} \label{eq: cohomology of nerves}
     H^*\left(N(\alpha), N(\alpha|_A);G\right) 
\end{equation}     
be the (say, singular\footnote{Here we only need to consider finite simplicial complexes.
So any cohomology theory provides the same result.}) cohomology group of the pair $\left(N(\alpha), N(\alpha|_A)\right)$
with the coefficient group $G$.
If $\beta\geq \alpha$ is a refinement, we can define a simplicial map from 
$\left(N(\beta), N(\beta|_A)\right)$ to $\left(N(\alpha), N(\alpha|_A)\right)$.
Although this map itself is not canonically defined, the induced homomorphism 
\[  H^*\left(N(\alpha), N(\alpha|_A); G \right) \to H^*\left(N(\beta), N(\beta|_A); G\right) \]
is canonically defined.
So (\ref{eq: cohomology of nerves}) forms a directed system where $\alpha$ runs over open covers of $K$.
We define the \textbf{\v{C}ech cohomology group} $\check{H}^*(K, A; G)$ as the direct limit
\[  \check{H}^*(K,A;G) = \varinjlim_{\alpha} H^*\left(N(\alpha), N(\alpha|_A); G\right). \]
The following trivial lemma will be crucial later.

\begin{lemma} \label{lemma: effective cohomology}
If the natural map $H^q\left(N(\alpha), N(\alpha|_A); G\right) \to \check{H}^q(K,A;G)$ is nonzero for some $q\geq 0$ then
$D(\alpha) \geq q$.
\end{lemma}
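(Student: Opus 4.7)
The plan is to argue by contrapositive: assuming $D(\alpha) \leq q-1$, I will show that the natural map $H^q(N(\alpha), N(\alpha|_A); G) \to \check{H}^q(K,A;G)$ must vanish.

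The first step is to invoke the definition of $D(\alpha)$ to extract a refinement $\beta \geq \alpha$ with $\mathrm{ord}(\beta) \leq q-1$. The crucial observation is that $\mathrm{ord}(\beta) \leq q-1$ means exactly that no $q+1$ distinct members of $\beta$ share a common point, which is the same as saying the nerve complex $N(\beta)$ carries no simplex of dimension $\geq q$; the subcomplex $N(\beta|_A)$ inherits the same bound. From this purely simplicial consideration, the relative cohomology $H^q(N(\beta), N(\beta|_A); G)$ vanishes, regardless of the coefficient group $G$.

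The second step is to factor the map into \v{C}ech cohomology through $\beta$. The refinement $\beta \geq \alpha$ induces the canonical homomorphism
\[
H^q\bigl(N(\alpha), N(\alpha|_A); G\bigr) \longrightarrow H^q\bigl(N(\beta), N(\beta|_A); G\bigr)
\]
recalled just before the lemma, and composing with the map into the direct limit over all open covers gives back the natural map appearing in the statement. Since this factorization passes through the zero group, the natural map is itself zero, contradicting the hypothesis.

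I do not expect any genuine obstacle here: the argument is essentially bookkeeping, resting on two standard facts, namely that the order of a finite open cover equals the dimension of its nerve as a simplicial complex, and that a finite simplicial pair of dimension $< q$ has trivial relative cohomology in degree $q$. The only subtle point worth mentioning is the canonicity (independence of the choice of simplicial refinement map) of the homomorphism induced by $\beta \geq \alpha$, but this is precisely the fact that makes the direct limit defining $\check{H}^*(K,A;G)$ well-posed, so it has already been absorbed into the setup.
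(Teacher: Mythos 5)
Your argument is correct and is essentially the same as the paper's proof: both pass to a refinement $\beta\geq\alpha$ with $\mathrm{ord}(\beta)<q$, note that $N(\beta)$ then has dimension $<q$ so $H^q\left(N(\beta),N(\beta|_A);G\right)=0$, and conclude by factoring the natural map through this vanishing group. The indexing (order versus nerve dimension) is handled correctly, so there is nothing to add.
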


\begin{proof}
Suppose $D(\alpha) < q$. Then there exists $\beta\geq \alpha$ with $\mathrm{ord}(\beta) < q$.
The nerve complex $N(\beta)$ has dimension smaller than $q$.
So $H^q\left(N(\beta), N(\beta|_A); G\right) =0$.
The map $H^q\left(N(\alpha), N(\alpha|_A); G\right) \to \check{H}^q(K,A;G)$ factors into
\[ H^q\left(N(\alpha), N(\alpha|_A); G\right) \to H^q\left(N(\beta), N(\beta|_A); G\right) \to   \check{H}^q(K,A;G). \]
\end{proof}

We define the \textbf{cohomological dimension} $\dim_G K$ as the supremum of $q\geq 0$
satisfying $\check{H}^q(K, A; G) \neq 0$ for some closed subset $A\subset K$.
An immediate consequence of Lemma \ref{lemma: effective cohomology}
is that $\dim_G K \leq \dim K$.
At least for finite dimensional\footnote{The fundamental difficulty of Problem \ref{problem: infinite dimension} 
comes from the failure of Theorem \ref{theorem: Alexandroff} for infinite dimensional spaces.}
 $K$, the cohomology has an enough information to determine the topological dimension
\cite[Theorem 1.4]{Dranishnikov}:

\begin{theorem}[Alexandroff]  \label{theorem: Alexandroff}
 If $K$ is finite dimensional then $\dim K = \dim_\mathbb{Z} K$.
\end{theorem}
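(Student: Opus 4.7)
The easy inequality $\dim_{\mathbb{Z}} K \leq \dim K$ is immediate from Lemma \ref{lemma: effective cohomology}: a nonzero class in $\check{H}^q(K, A; \mathbb{Z})$ forces some open cover to have degree at least $q$, so $\dim K \geq q$. The content of the theorem is therefore the reverse inequality. Set $n = \dim K$; the plan is to exhibit a closed subset $A \subset K$ with $\check{H}^n(K, A; \mathbb{Z}) \neq 0$.

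My starting point would be the classical essential-map characterization of topological dimension: $\dim K \geq n$ if and only if there exist a closed subset $A \subset K$ and a continuous map $f : A \to S^{n-1}$ that does not extend continuously to $K$. This is a prerequisite of cohomological dimension theory rather than a consequence, proved for instance via the Menger--N\"obeling embedding of $K$ into $[0,1]^{2n+1}$ together with the essentiality of the identity on $[0,1]^n$. Fix such a pair $(A,f)$. The aim is then to convert the non-extendability of $f$ into a nonzero \v{C}ech class by simplicial obstruction theory on nerves.

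For $n \geq 2$: for every sufficiently fine finite open cover $\alpha$ of $K$ with $\mathrm{ord}(\alpha) \leq n$, the map $f$ is carried up to homotopy by a simplicial map $g_\alpha : N(\alpha|_A) \to S^{n-1}$, and since $S^{n-1}$ is $(n-2)$-connected the only obstruction to extending $g_\alpha$ simplicially over $N(\alpha)$ is the primary class in $H^n(N(\alpha), N(\alpha|_A); \pi_{n-1}(S^{n-1})) = H^n(N(\alpha), N(\alpha|_A); \mathbb{Z})$. These classes are natural under refinement and assemble into a class $\mathcal{O}(f) \in \check{H}^n(K, A; \mathbb{Z})$. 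If $\mathcal{O}(f)$ vanished, then for some sufficiently fine $\alpha$ the obstruction $\mathcal{O}(g_\alpha)$ would already vanish, yielding a simplicial extension; pulling this back to $K$ via a partition of unity subordinate to $\alpha$ would produce a continuous extension of $f$ (up to homotopy, then genuinely by the homotopy extension property for the closed pair $(K, A)$), contradicting the choice of $(A, f)$. For $n = 1$ the argument degenerates to one step: a non-extendable map $A \to S^0$ is a locally constant function on $A$ not coming from one on $K$, and the long exact sequence of $(K,A)$ sends its class in $\check{H}^0(A;\mathbb{Z})$ to a nonzero element of $\check{H}^1(K, A; \mathbb{Z})$.

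The main obstacle will be the pull-back step in the obstruction argument: promoting a simplicial extension on some $N(\alpha)$ into a genuine continuous extension $K \to S^{n-1}$ of $f$ itself. This uses the standard but delicate combinatorial bookkeeping that relates $K$ to the inverse system of nerves of its covers --- partitions of unity, barycentric coordinates, and the homotopy extension property --- all routine on polyhedra but requiring some care for a general compact metric space. Everything else in the argument is formal.
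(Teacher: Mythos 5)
The paper offers no proof of this statement: it is imported verbatim from Dranishnikov's survey (Theorem 1.4 there) and used as a black box, so there is nothing internal to compare your argument against. What you give is the classical proof of Alexandroff's theorem, and it is essentially correct as a sketch. The easy inequality via Lemma \ref{lemma: effective cohomology} is exactly right. For the reverse inequality your two imported ingredients are standard and legitimate: the characterization that $\dim K\ge n$ iff some $f\colon A\to S^{n-1}$ from a closed subset fails to extend over $K$ (this is in Engelking, already in the bibliography), and the fact that on a nerve $N(\alpha)$ with $\mathrm{ord}(\alpha)\le n$ the primary class in $H^{n}\bigl(N(\alpha),N(\alpha|_A);\pi_{n-1}(S^{n-1})\bigr)$ is the \emph{complete} obstruction --- note that the reason the higher obstructions vanish is dimensional (they live in degree $k+1>n\ge\dim N(\alpha)$), not the $(n-2)$-connectivity of $S^{n-1}$, which only guarantees that the primary obstruction is the first one. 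The step you rightly flag as delicate (carrying $f$ to compatible nerve maps $g_\alpha$, naturality of the obstruction classes under refinement so that they define an element of the direct limit, and converting a nerve extension back into a genuine extension of $f$ via the canonical map $K\to N(\alpha)$ and Borsuk's homotopy extension theorem) is the real content, but it is the standard bridge between \v{C}ech theory and extension problems and poses no essential difficulty for compact metric pairs. One small repair in the $n=1$ case: a priori the class $[\tilde f]\in\check H^0(A;\mathbb{Z})$ could be the restriction of a $\mathbb{Z}$-valued locally constant function on $K$ without $f$ extending as a map to $S^0$; but since $\mathbb{Z}$ is discrete such an extension can be retracted back onto $\{0,1\}$, so this cannot happen and exactness does give a nonzero image in $\check H^1(K,A;\mathbb{Z})$. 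In short, the proposal is a correct, self-contained route to a result the paper simply cites; what it buys is independence from the reference, at the cost of importing the essential-map theorem and the nerve-approximation machinery.
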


The following result is given in \cite[Lemma 2.9]{Dranishnikov}.

\begin{theorem}  \label{theorem: finding good coefficient}
There exists a field $F$ (depending on $K$) satisfying $\dim_F K \geq \dim_\mathbb{Z} K-1$.
Indeed we can take $F = \mathbb{Q}$ or $\mathbb{Z}/p\mathbb{Z}$ for some prime number $p$.
\end{theorem}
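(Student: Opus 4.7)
The plan is to reduce from integral to field coefficients by examining the torsion structure of the top-dimensional \v{C}ech cohomology. Set $n=\dim_{\mathbb{Z}} K$ and, by the definition of cohomological dimension, choose a closed subset $A\subset K$ with $\check{H}^n(K,A;\mathbb{Z})\neq 0$. I would split into cases according to whether this group contains an element of infinite order or is entirely torsion: in the first case I will take $F=\mathbb{Q}$ and deduce $\dim_{\mathbb{Q}} K\geq n$, while in the second I will take $F=\mathbb{Z}/p\mathbb{Z}$ for a prime $p$ dividing the order of some nonzero element and obtain $\dim_{\mathbb{Z}/p}K\geq n-1$.

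For the infinite-order case, I would use that \v{C}ech cohomology of a compact pair is the directed colimit $\varinjlim_\alpha H^n\left(N(\alpha),N(\alpha|_A);G\right)$ over finite nerve pairs, together with the fact that $-\otimes_{\mathbb{Z}}\mathbb{Q}$ commutes with directed colimits and, by the universal coefficient theorem applied to the finite cochain complexes of nerves, with simplicial cohomology. This yields the identification
\[
   \check{H}^n(K,A;\mathbb{Q}) \;\cong\; \check{H}^n(K,A;\mathbb{Z})\otimes_{\mathbb{Z}}\mathbb{Q}.
\]
If $\check{H}^n(K,A;\mathbb{Z})$ contains a non-torsion element $\alpha$, then $\alpha\otimes 1$ is nonzero in the tensor product, so $\check{H}^n(K,A;\mathbb{Q})\neq 0$ and hence $\dim_{\mathbb{Q}} K\geq n$.

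For the torsion case, pick a nonzero element of $\check{H}^n(K,A;\mathbb{Z})$ and multiply by a suitable integer to obtain a class $\alpha$ of prime order $p$. The coefficient sequence $0\to\mathbb{Z}\xrightarrow{\,p\,}\mathbb{Z}\to\mathbb{Z}/p\to 0$ induces, for each finite nerve pair, a long exact sequence in simplicial cohomology; passing to the directed colimit (filtered colimits of abelian groups being exact) produces the Bockstein sequence
\[
   \check{H}^{n-1}(K,A;\mathbb{Z}/p)\xrightarrow{\beta}\check{H}^n(K,A;\mathbb{Z})\xrightarrow{\,p\,}\check{H}^n(K,A;\mathbb{Z}).
\]
Since $p\alpha=0$, the class $\alpha$ lies in the image of $\beta$, so $\beta\neq 0$ and hence $\check{H}^{n-1}(K,A;\mathbb{Z}/p)\neq 0$, giving $\dim_{\mathbb{Z}/p}K\geq n-1$.

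The step I expect to be the main obstacle is the transition from simplicial cohomology of the nerves to \v{C}ech cohomology of $(K,A)$, namely justifying both the tensor-product identification with $\mathbb{Q}$ and the existence of the Bockstein long exact sequence at the \v{C}ech level. Both facts are formal consequences of the exactness of filtered colimits of abelian groups combined with the universal coefficient theorem for finite nerves, but they must be invoked with care since \v{C}ech cohomology is known to fail some Eilenberg--Steenrod axioms outside the compact-pair framework used here.
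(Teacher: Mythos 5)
Your proposal is correct and follows essentially the same route as the paper: the identical case split on whether the top nonvanishing integral \v{C}ech group $\check{H}^n(K,A;\mathbb{Z})$ has a non-torsion element (take $F=\mathbb{Q}$, no degree loss) or is all torsion (take $F=\mathbb{Z}/p\mathbb{Z}$, lose one degree). The only cosmetic difference is that in the torsion case you invoke the Bockstein long exact sequence for $0\to\mathbb{Z}\xrightarrow{p}\mathbb{Z}\to\mathbb{Z}/p\to 0$, whereas the paper reads off the same degree drop from the torsion-product term $\check{H}^{q}(K,A)*F$ in the universal coefficient sequence; these encode the same information, and your justification of the exact sequences at the \v{C}ech level via exactness of filtered colimits over nerves is sound for compact pairs.
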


\begin{proof}
This follows from the universal coefficient formula (\cite[p. 336]{Spanier}, \cite[\S 39-4]{Nagami}):
\begin{equation}  \label{eq: universal coefficient formula}  
    0 \rightarrow     \check{H}^n(K,A) \otimes F \rightarrow \check{H}^n(K,A; F) \rightarrow 
     \check{H}^{n+1}(K,A) * F \rightarrow 0.   
\end{equation}   
Here $*$ is the torsion product \cite[p.220]{Spanier}.  
Take any $q\leq \dim_\mathbb{Z} K$ with $\check{H}^q(K,A) \neq \emptyset$ for some closed subset $A \subset K$.   
If $\check{H}^q(K,A)$ contains a non-torsion element u (i.e. $m u \neq 0$ for all $m \geq 1$) then 
$\check{H}^q(K,A)\otimes \mathbb{Q}\neq 0$.
It follows from (\ref{eq: universal coefficient formula}) with $F=\mathbb{Q}$ and $n=q$ that $\check{H}^q(K,A;\mathbb{Q}) \neq 0$
and hence $\dim_\mathbb{Q} K \geq q$.
If $\check{H}^q(K,A)$ contains a torsion element $u\neq 0$ (let $m> 1$ be the order of $u$) then 
$\check{H}^q(K,A) * (\mathbb{Z}/p \mathbb{Z}) \neq 0$ for prime divisors $p$ of $m$ 
because 
\[  0 \rightarrow (\mathbb{Z}/m\mathbb{Z}) * (\mathbb{Z}/p\mathbb{Z}) \rightarrow \check{H}^q(K,A) * (\mathbb{Z}/p \mathbb{Z}), 
    \quad   (\mathbb{Z}/m\mathbb{Z}) * (\mathbb{Z}/p\mathbb{Z}) \cong \mathbb{Z}/p \mathbb{Z}. \]
It follows from (\ref{eq: universal coefficient formula}) with $F=\mathbb{Z}/p\mathbb{Z}$ and $n=q-1$ that 
$\check{H}^{q-1} \left(K,A;\mathbb{Z}/p\mathbb{Z}\right) \neq 0$ and hence $\dim_{\mathbb{Z}/p\mathbb{Z}} K \geq q-1$.
\end{proof}

The next theorem shows radically different behaviors of compact metric spaces of basic/exceptional types.
The proof is given in \cite[Theorem 3.16]{Dranishnikov}. 
See also the last two paragraphs of \S 3 of \cite{Dranishnikov}
where the basic/exceptional dichotomy is explained.

\begin{theorem}  \label{theorem: basic/exceptional}
If $K$ is finite dimensional then
\[ \dim K^n = \begin{cases}  n \dim K  & (\text{if $K$ has basic type}) \\
                                      n \dim K -n +1 & (\text{if $K$ has exceptional type}).
                   \end{cases}                      
\]
\end{theorem}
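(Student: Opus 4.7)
The plan is to reduce $\dim K^n$ to cohomological dimension via Theorem~\ref{theorem: Alexandroff}, exploit the K\"unneth-type multiplicativity $\dim_F(X\times Y)=\dim_F X+\dim_F Y$ for field coefficients $F$ (a standard product theorem in cohomological dimension theory), and then transfer between integer and field coefficients via Theorem~\ref{theorem: finding good coefficient}. The upper bound $\dim K^n\leq n\dim K$ in both cases follows by iterating the classical product inequality $\dim(X\times Y)\leq \dim X+\dim Y$, so the real content lies in the lower bound in the basic case and in the improved upper bound (together with its matching lower bound) in the exceptional case.

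First I would establish the following characterization: $K$ has basic type if and only if $\dim_F K=\dim K$ for some field $F$. For the ``if'' direction, if $\dim_F K=d:=\dim K$ then $\dim K^2\geq \dim_F K^2=2d$, forcing basic type. For the ``only if'' direction, apply Theorem~\ref{theorem: finding good coefficient} to $K^2$: some field $F$ gives $2\dim_F K=\dim_F K^2\geq \dim_\mathbb{Z} K^2-1=2d-1$, and integrality upgrades this to $\dim_F K\geq d$. In the exceptional case the same reasoning forces $\max_F\dim_F K=d-1$: at most $d-1$ by the characterization, at least $d-1$ by Theorem~\ref{theorem: finding good coefficient} applied directly to $K$.

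With this characterization the basic case is immediate: $n\dim K=n\dim_F K=\dim_F K^n\leq \dim K^n\leq n\dim K$. The upper bound in the exceptional case is obtained by applying Theorem~\ref{theorem: finding good coefficient} to $K^n$:
\[ \dim K^n=\dim_\mathbb{Z} K^n\leq 1+\max_F\dim_F K^n=1+n\max_F\dim_F K=1+n(d-1)=nd-n+1. \]

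The main obstacle is the matching lower bound $\dim K^n\geq nd-n+1$ in the exceptional case, since K\"unneth over $\mathbb{Z}/p$ (with $p$ chosen so that $\dim_{\mathbb{Z}/p}K=d-1$) only supplies $\dim_{\mathbb{Z}/p}K^n=nd-n$. To produce the missing unit I would invoke the Bockstein long exact sequence associated to $0\to\mathbb{Z}\to\mathbb{Z}\to\mathbb{Z}/p\to 0$ applied to a pair $(K^n,B)$ realizing the top $\mathbb{Z}/p$-class: this class either lifts to an integer class in degree $nd-n$, or it Bocksteins to a nonzero element of $\check{H}^{nd-n+1}(K^n,B;\mathbb{Z})$. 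The point is to rule out the first alternative through an integer K\"unneth calculation, using that in the exceptional case the top integer class of $K$ must be $p$-torsion (otherwise $\dim_\mathbb{Q} K=d$ would force basic type); the Tor terms in the integer K\"unneth formula then make the top $\mathbb{Z}/p$-class of $K^n$ a genuine Bockstein image. This Bockstein/K\"unneth bookkeeping is the technical heart of the argument and corresponds to the content of Dranishnikov's Theorem 3.16.
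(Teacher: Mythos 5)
Your treatment of the basic case and of the upper bound $\dim K^n\le n\dim K-n+1$ in the exceptional case is correct and is essentially the argument the paper gives: both rest on the field-coefficient product formula $\dim_F K^n=n\dim_F K$ (which you, like the paper, take as a black box) together with Theorems \ref{theorem: Alexandroff} and \ref{theorem: finding good coefficient}, and your characterization ``basic type $\Leftrightarrow$ $\dim_F K=\dim K$ for some field $F$'' is the same dichotomy the paper runs through (stated contrapositively there). Be aware, though, that the paper does not actually prove the full statement of Theorem \ref{theorem: basic/exceptional}: it explicitly sketches only the weaker bounds $n\dim K-n\le\dim K^n\le n\dim K-n+1$ in the non-basic case, observes in a footnote that this weaker statement already suffices for the mean dimension computation, and refers to Dranishnikov for the sharp lower bound.

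The genuine gap in your proposal is precisely at that sharp lower bound $\dim K^n\ge n\dim K-n+1$ (write $d=\dim K$). The Bockstein dichotomy you set up --- the top mod-$p$ class of $K^n$ in degree $n(d-1)$ either lifts integrally or has nonzero integral Bockstein in degree $n(d-1)+1$ --- is correct, but the step ``rule out the first alternative'' is neither carried out nor obviously achievable as stated: a lift of that class to $\check H^{n(d-1)}(K^n,B;\mathbb{Z})$ produces no contradiction at all (it is perfectly consistent with $\dim K^n=nd-n$), so nothing in your argument forces the second alternative. If instead you try to compute the Bockstein of the external product directly, the derivation property gives a signed sum $\sum_i\pm x_1\times\cdots\times\beta(x_i)\times\cdots\times x_n$, and showing that this sum is nonzero is exactly the hard point; this is where Dranishnikov's proof of his Theorem 3.16 invokes the full Bockstein-basis machinery rather than a single exact sequence. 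A route closer to what the paper's footnote suggests (for $n=2$) avoids Bocksteins altogether: in the exceptional case $\check H^{d}(K,A;\mathbb{Z})$ is a nonzero torsion group, so the torsion-product term $\check H^d(K,A)*\check H^d(K,A)$ in the integral K\"unneth sequence is nonzero and is a quotient of $\check H^{2d-1}\left((K,A)\times(K,A);\mathbb{Z}\right)$, giving $\dim K^2\ge 2d-1$; extending this to general $n$ still requires tracking where the $p$-torsion survives through the iterated K\"unneth sequences, and that bookkeeping is the content your proposal is missing.
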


\begin{proof}[Sketch of the proof]
We sketch the proof of the following weaker statement, 
which is enough\footnote{Strictly speaking, this weaker statement misses a point;
it does not yield (\ref{eq: basic/exceptional}). 
It gives only
$2\dim K -2 \leq \dim K^2 \leq 2\dim K$.
So we cannot define the basic/exceptional dichotomy from this. 
But if we define that \textit{$K$ has exceptional type if $K$ does not have basic type} then everything becomes logically consistent.

We can prove $\dim K^2 \geq 2\dim K-1$ by investigating
 the K\"{u}nneth formula (\cite[pp. 359-360]{Spanier}, \cite[\S 41]{Nagami})
 \[  0 \rightarrow \left(\check{H}^*(K,A)\otimes \check{H}^*(K,A)\right)^q \rightarrow 
      \check{H}^q\left((K,A)\times (K,A)\right) \rightarrow 
      \left(\check{H}^*(K,A) * \check{H}^*(K,A)\right)^{q+1} \rightarrow 0. \] 
      We use this with $q=2\dim K$ or $q=2\dim K -1$, depending on whether $\check{H}^{\dim K}(K,A)$ contains a non-torsion element or not.}
 for the proof of Theorem \ref{main theorem}.
(Recall: We defined that $K$ has basic type if $\dim K^2 = 2\dim K$.)
\begin{equation*}
   \begin{split}
   & \dim K^n = n \dim K \quad (\text{if $K$ has basic type}),  \\
   n \dim K -n \leq & \dim K^n \leq n \dim K -n +1 \quad (\text{if $K$ does not have basic type}). 
  \end{split} 
\end{equation*} 

We use the following fact: If $F$ is a field then $\dim_F K^n = n \dim_F K$ (\cite[Proposition 3.3]{Dranishnikov}).
The inequality $\dim_F K^n \geq n \dim_F K$ directly follows from  the K\"{u}nneth formula $\check{H}^*((K,A)^n; F) \cong \check{H}^*(K,A;F)^{\otimes n}$
(\cite[pp. 359-360]{Spanier}, \cite[\S 41]{Nagami}).
The proof of the reverse inequality $\dim_F K^n \leq n \dim_F K$ is a bit more involved (based on the Mayer--Vietoris exact sequence 
of compact-supported cohomology) and we omit it here.

First suppose that there exists a field $F$ satisfying $\dim_F K = \dim K$.
Then $\dim_F K^n = n \dim_F K = n \dim K$. 
Hence $\dim K^n \geq \dim_F K^n = n \dim K$.
Since the reverse inequality $\dim K^n \leq n \dim K$ is always true, we get $\dim K^n = n \dim K$.
In particular $K$ has basic type.

Next suppose that all fields $F$ satisfy $\dim_F K <\dim K$.
Take a field $F$ satisfying $\dim_F K = \dim K -1$ (Theorem \ref{theorem: finding good coefficient}).
Since $\dim K^n \geq \dim_F K^n = n \dim_F K = n(\dim K -1)$, we get $\dim K^n \geq n \dim K -n$.

Let $n\geq 2$.
Take a field $F'$ (depending on $n$) satisfying $\dim_{F'} K^n \geq \dim K^n -1$ (Theorem \ref{theorem: finding good coefficient}).
We must have $\dim_{F'} K \leq \dim K-1$.
Hence $\dim K^n \leq \dim_{F'} K^n + 1 = n \dim_{F'} K +1 \leq  n \dim K -n+1$.
In particular $K$ does not have basic type.
\end{proof}

\section{Proof of Theorem \ref{main theorem}}    \label{section: proof of main theorem}.

Throughout this section we assume that $K$ is a finite dimensional compact metric space
with a distance function $\rho$. 
Let $n \geq 1$. We define a distance $\rho_n$ on $K^n$ by 
\[ \rho_n \left((x_0,\dots,x_{n-1}), (y_0,\dots, y_{n-1})\right) = \max_{0\leq i <n} \rho(x_i,y_i). \]

\begin{lemma}  \label{lemma: uniform lower bound on widim}
There exists $\delta>0$ such that for every $n \geq 1$ 
\[  \widim_\delta\left(K^n, \rho_n\right) \geq n(\dim K-1). \]
Here the point is that $\delta$ is independent of $n$.
\end{lemma}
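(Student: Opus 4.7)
The plan is to exhibit, for a suitable $\delta>0$ and every $n$, a nonzero \v{C}ech cohomology class of degree $n(\dim K-1)$ on $K^n$ (relative to some closed subset) that is detected by every open cover of $\rho_n$-mesh less than $\delta$, via Lemma \ref{lemma: effective cohomology}. First, applying Theorem \ref{theorem: finding good coefficient} together with Theorem \ref{theorem: Alexandroff} yields a field $F$ (either $\mathbb{Q}$ or $\mathbb{Z}/p\mathbb{Z}$) with $\dim_F K\geq \dim K-1$, so there is a closed subset $A\subset K$ and a nonzero class $u\in\check{H}^{\dim K-1}(K,A;F)$.

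By definition of $\check{H}^*$ as a direct limit, there is a finite open cover $\alpha_0$ of $K$ and a class $u_0\in H^{\dim K-1}\left(N(\alpha_0),N(\alpha_0|_A);F\right)$ whose image in $\check{H}^{\dim K-1}(K,A;F)$ is $u$. Let $\delta>0$ be a Lebesgue number for $\alpha_0$, so that every subset of $K$ of $\rho$-diameter less than $\delta$ is contained in some element of $\alpha_0$. I claim this $\delta$ is the desired constant.

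Fix $n\geq 1$ and set $A_n:=\bigcup_{i=1}^n K^{i-1}\times A\times K^{n-i}\subset K^n$, and let $\alpha_0^n$ denote the product open cover of $K^n$ with elements $V_1\times\cdots\times V_n$, $V_i\in\alpha_0$. The nerve $N(\alpha_0^n)$ is the categorical product of $n$ copies of $N(\alpha_0)$, and the subcomplex $N(\alpha_0^n|_{A_n})$ is exactly the product-pair subcomplex associated to $(N(\alpha_0),N(\alpha_0|_A))^{\times n}$. Iterating the K\"{u}nneth formula for finite simplicial pairs with field coefficients yields a nonzero class $u_0\times\cdots\times u_0\in H^{n(\dim K-1)}\left(N(\alpha_0^n),N(\alpha_0^n|_{A_n});F\right)$, whose image in $\check{H}^{n(\dim K-1)}(K^n,A_n;F)$ is the cross product $u\times\cdots\times u$, nonzero by the K\"{u}nneth formula for compact pairs with field coefficients (the same formula invoked in the sketch of Theorem \ref{theorem: basic/exceptional}). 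Now let $\gamma$ be any open cover of $K^n$ with $\mesh(\gamma,\rho_n)<\delta$. For each $U\in\gamma$ and each $i$, the projection $\pi_i(U)$ has $\rho$-diameter less than $\delta$ and thus lies in some $V_i\in\alpha_0$, so $U\subset V_1\times\cdots\times V_n$, showing $\gamma\geq\alpha_0^n$. The induced map $H^{n(\dim K-1)}\left(N(\alpha_0^n),N(\alpha_0^n|_{A_n});F\right)\to H^{n(\dim K-1)}\left(N(\gamma),N(\gamma|_{A_n});F\right)$ sends $u_0\times\cdots\times u_0$ to a class whose further image in $\check{H}^{n(\dim K-1)}(K^n,A_n;F)$ remains $u\times\cdots\times u\neq 0$. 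Lemma \ref{lemma: effective cohomology} then gives $D(\gamma)\geq n(\dim K-1)$, and since this holds for every such $\gamma$, we obtain $\widim_\delta(K^n,\rho_n)\geq n(\dim K-1)$.

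The main obstacle is the K\"{u}nneth step: one must confirm that the simplicial cross product $u_0\times\cdots\times u_0$ at the nerve level really represents the \v{C}ech cross product $u\times\cdots\times u$, so that its nonvanishing in the direct limit forces nonvanishing at every refinement. Granted this compatibility (which is the standard naturality of K\"{u}nneth through the direct system of nerves), the rest of the argument consists only of a Lebesgue-number estimate together with a direct application of Lemma \ref{lemma: effective cohomology}, and crucially $\delta$ depends only on $\alpha_0$, hence only on $K$, not on $n$.
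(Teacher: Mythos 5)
Your proof is correct and follows essentially the same route as the paper: a field $F$ with $\dim_F K\geq \dim K-1$ (via Theorems \ref{theorem: Alexandroff} and \ref{theorem: finding good coefficient}), a finite cover $\alpha_0$ detecting a nonzero \v{C}ech class, the product cover $\alpha_0^n$ with the K\"{u}nneth formula over a field propagating nonvanishing to $(K^n,A_n)$ (your element-wise cross-product argument is exactly the paper's commutative diagram), and a Lebesgue number for $\alpha_0$ supplying an $n$-independent $\delta$. The only cosmetic difference is that the paper works in degree $q=\dim_F K$ rather than asserting a nonzero class in degree exactly $\dim K-1$ as you do (which strictly speaking invokes the no-gap property of cohomological dimension); since $q\geq \dim K-1$, using $q$ throughout sidesteps that point and yields the same bound.
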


\begin{proof}
By Theorems \ref{theorem: Alexandroff} and \ref{theorem: finding good coefficient}
we can find a field $F$ satisfying $\dim_F K \geq \dim K-1$.
Set $q = \dim_F K$.
There exists a closed subset $A\subset K$ with $\check{H}^q(K,A;F) \neq 0$.
We take an open cover $\alpha$ of $K$ such that the natural map 
$H^q\left(N(\alpha), N(\alpha|_A); F\right) \to \check{H}^q(K,A;F)$ is nonzero.
Let $n\geq 1$ and define a closed subset $A_n\subset K^n$ by 
$(K^n, A_n) = (K,A)^n$, namely $A_n$ is the set of points $(x_0,\dots, x_{n-1})\in K^n$ satisfying 
$x_i\in A$ for some $i$.
We define an open cover $\alpha^n$ of $K^n$ by 
\[ \alpha^n = \{U_0\times \dots \times U_{n-1}|\, U_0,\dots, U_{n-1}\in \alpha\}. \]

We have the following commutative diagram:
\[
  \begin{CD}
     H^q\left(N(\alpha), N(\alpha|_A) ; F \right)^{\otimes n} @>>> H^{nq}\left(N(\alpha^n),N(\alpha^n|_{A_n});F\right) \\
     @VVV    @VVV \\
     \check{H}^q\left(K,A;F\right)^{\otimes n}   @>>>  \check{H}^{nq}(K^n, A_n; F)
  \end{CD}
\]
The horizontal arrows are injective (the K\"{u}nneth formula \cite[pp. 359-360]{Spanier}, \cite[\S 41]{Nagami}).
Since $F$ is a field and the map $H^q\left(N(\alpha), N(\alpha|_A); F\right) \to \check{H}^q(K,A;F)$ is nonzero,
the left vertical arrow is nonzero. 
So the right vertical arrow $H^{nq}\left(N(\alpha^n),N(\alpha^n|_{A_n});F\right) \to  \check{H}^{nq}(K^n, A_n; F)$
is nonzero.
It follows from Lemma \ref{lemma: effective cohomology} that $D(\alpha^n) \geq nq$.

Let $\delta>0$ be the Lebesgue number of $\alpha$;
namely if $V\subset K$ satisfies $\diam V < \delta$ then there exists $U\in \alpha$ with $V\subset U$.
If $\beta$ is an open cover of $K^n$ with $\mesh\left(\beta, \rho_n\right) < \delta$ then 
$\beta\geq \alpha^n$ and hence $\mathrm{ord}(\beta) \geq D(\alpha^n) \geq nq$.
This implies 
\[  \widim_\delta\left(K^n,\rho_n\right) \geq n q \geq n(\dim K-1). \]
\end{proof}

\begin{remark}
Gromov \cite[\S\S 2.6.1-2.6.2]{Gromov} proved a result very close to the above lemma.
Indeed I came up with the proof of Lemma \ref{lemma: uniform lower bound on widim}
when I tried to understand \S 2.6 of \cite{Gromov}, where he developed a cohomological approach to mean dimension 
in a quite broader perspective.
I recommend interested readers to look at his paper. It certainly contains (too) many unexplored issues.
\end{remark}

Let $\sigma: K^\mathbb{Z} \to K^\mathbb{Z}$ be the full shift on the alphabet $K$.

\begin{proposition}  \label{main proposition}
   $\mdim\left(K^\mathbb{Z},\sigma\right) \geq \dim K-1$.
\end{proposition}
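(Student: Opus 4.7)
The plan is to bootstrap Lemma \ref{lemma: uniform lower bound on widim} into a lower bound on $\widim_\varepsilon(K^{\mathbb{Z}},d_n)$ for a single $\varepsilon>0$ valid for every $n$, from which $\mdim(K^{\mathbb{Z}},\sigma)\geq \dim K-1$ will follow directly from the definition of mean dimension. The bridge will be a bi-Lipschitz section $s_n\colon K^n\to K^{\mathbb{Z}}$ whose distortion is independent of $n$. Since mean dimension does not depend on the choice of compatible metric, I am free to work with the weighted product metric
\[
 d(x,y)=C^{-1}\sum_{i\in\mathbb{Z}}2^{-|i|}\rho(x_i,y_i),\qquad C=\sum_{i\in\mathbb{Z}}2^{-|i|},
\]
on $K^{\mathbb{Z}}$.

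Fix a base point $k_0\in K$ and define $s_n\colon K^n\to K^{\mathbb{Z}}$ by placing $(a_0,\dots,a_{n-1})$ in coordinates $0,1,\dots,n-1$ and the constant $k_0$ in every other coordinate. For each $0\leq j<n$, only the indices $i$ with $0\leq i+j<n$ contribute to $d(\sigma^j s_n(a),\sigma^j s_n(b))$. Isolating the $i=0$ term for the lower bound and bounding the whole sum by $\rho_n(a,b)$ for the upper bound yields the uniform estimate
\[
 C^{-1}\rho_n(a,b)\ \leq\ d_n(s_n(a),s_n(b))\ \leq\ \rho_n(a,b)
\]
for all $n\geq 1$ and all $a,b\in K^n$.

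With this bi-Lipschitz embedding the transfer of $\widim$-bounds is routine. Any open cover $\alpha$ of $K^{\mathbb{Z}}$ with $\mesh(\alpha,d_n)<\varepsilon$ pulls back to the open cover $s_n^{-1}(\alpha)$ of $K^n$, whose mesh in $\rho_n$ is $<C\varepsilon$ by the lower bi-Lipschitz estimate and whose degree satisfies $D(s_n^{-1}(\alpha))\leq D(\alpha)$ (pulling back a refinement of $\alpha$ along $s_n$ does not increase the order, since every point in $K^n$ lies in at most as many pulled-back sets as its image lies in original ones). Hence $\widim_{C\varepsilon}(K^n,\rho_n)\leq \widim_\varepsilon(K^{\mathbb{Z}},d_n)$. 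Setting $\varepsilon=\delta/C$ with the $\delta>0$ supplied by Lemma \ref{lemma: uniform lower bound on widim} gives
\[
 n(\dim K-1)\ \leq\ \widim_{\delta/C}(K^{\mathbb{Z}},d_n)\qquad(n\geq 1),
\]
and dividing by $n$, letting $n\to\infty$, and then $\varepsilon\to 0$ completes the proof.

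The substantive work of the proposition has already been absorbed into Lemma \ref{lemma: uniform lower bound on widim}; the only remaining subtlety is to ensure that the bi-Lipschitz constants of $s_n$ are uniform in $n$, so that a single threshold $\delta/C$ simultaneously witnesses the lower bound for every $n$ in the mean-dimension limit. Both the exponentially decaying weight in $d$ and the constant-extension section conspire to make this automatic; a uniform weight in $d$ would not even yield a finite metric, and a more elaborate extension depending on $n$ would destroy the uniformity needed for the limit to survive.
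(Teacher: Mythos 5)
Your proof is correct and follows essentially the same route as the paper: both embed $K^n$ into $K^{\mathbb{Z}}$ by constant extension at a base point, observe that this map does not decrease $\rho_n$-distances by more than a uniform factor, and transfer the lower bound of Lemma \ref{lemma: uniform lower bound on widim} to $\widim_\varepsilon\left(K^{\mathbb{Z}},d_n\right)$ for a single $\varepsilon$ independent of $n$. The normalization by $C$ and the two-sided bi-Lipschitz estimate are cosmetic; the paper works with the unnormalized metric and only the one-sided inequality $\rho_n(x,y)\leq d_n\left(f(x),f(y)\right)$, which already lets the threshold $\delta$ itself witness the bound.
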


\begin{proof}

We define a distance $d$ on $K^\mathbb{Z}$ by 
\[  d\left((x_i)_{i\in \mathbb{Z}}, (y_i)_{i\in \mathbb{Z}}\right) = \sum_{i\in \mathbb{Z}} 2^{-|i|} \rho(x_i,y_i). \]
Recall that we defined the distance $d_n$ on $K^\mathbb{Z}$ for each $n\geq 1$ by 
\[  d_n(x,y) = \max_{0\leq i <n} d(\sigma^i x, \sigma^i y). \]
Fix a point $p\in K$.
Let $n\geq 1$. We define a continuous map $f:K^n \to K^\mathbb{Z}$ by 
\[  f(x_0,\dots, x_{n-1})_i = \begin{cases} 
                                     x_i  & (0\leq i\leq n-1) \\
                                     p   &  (\text{otherwise}).
                                   \end{cases}  \]  
Then $\rho_n(x,y) \leq d_n\left(f(x), f(y)\right)$ and hence it follows that for $\varepsilon >0$
\[ \widim_\varepsilon \left(K^\mathbb{Z}, d_n\right) \geq \widim_\varepsilon \left(K^n, \rho_n\right). \]
We use Lemma \ref{lemma: uniform lower bound on widim} and get 
\[ \widim_\varepsilon \left(K^\mathbb{Z},d_n\right) \geq n (\dim K-1) \quad (0<\varepsilon \leq \delta). \]
Thus
\[  \mdim\left(K^\mathbb{Z},\sigma\right) = \lim_{\varepsilon \to 0}
    \left(\lim_{n\to \infty} \frac{\widim_\varepsilon \left(K^\mathbb{Z}, d_n\right)}{n} \right) \geq \dim K -1. \]
\end{proof}

Now we are ready to prove Theorem \ref{main theorem}.

\begin{proof}[Proof of Theorem \ref{main theorem}]
It follows from (\ref{eq: trivial upper bound on mean dimension}) and Proposition \ref{main proposition} that 
\[  \dim K -1 \leq \mdim\left(K^\mathbb{Z}, \sigma\right) \leq \dim K. \]
We amplify this by replacing $\left(K^\mathbb{Z},\sigma\right)$ with 
$\left(K^\mathbb{Z},\sigma^n\right)$, which is isomorphic to the full shift on the alphabet $K^n$.
Noting Lemma \ref{lemma: amplification}, we get 
\[ \frac{\dim K^n -1}{n} \leq \mdim\left(K^\mathbb{Z},\sigma\right) \leq \frac{\dim K^n}{n}. \]
It follows from Theorem \ref{theorem: basic/exceptional} that 
\[ \frac{\dim K^n}{n} = \begin{cases}   \dim K  & (\text{if $K$ has basic type}) \\
                                       \dim K -1 + \frac{1}{n} & (\text{if $K$ has exceptional type}).
                   \end{cases}                      
\]
Letting $n\to \infty$ we get the claim of the theorem.
\end{proof}

\vspace{0.5cm}

\address{ Masaki Tsukamoto \endgraf
Department of Mathematics, Kyoto University, Kyoto 606-8502, Japan}

\textit{E-mail address}: \texttt{tukamoto@math.kyoto-u.ac.jp}

\end{document}